\documentclass[12pt]{article}
\usepackage{amsmath,amssymb,amsthm, comment}

\newtheorem{theorem}{Theorem}[section]
\newtheorem{lemma}[theorem]{Lemma}
\newtheorem{corollary}[theorem]{Corollary}

\newtheorem{proposition}[theorem]{Proposition}

\newtheoremstyle{named}{}{}{\itshape}{}{\bfseries}{.}{.5em}{\thmnote{#3's }#1}
\theoremstyle{named}

\newtheoremstyle{nnamed}{}{}{\itshape}{}{\bfseries}{.}{.5em}{\thmnote{#3' }#1}
\theoremstyle{nnamed}

\newcommand{\CD}{\mathcal{C}\mathcal{D}}

\def\barr{\begin{array}}
\def\earr{\end{array}}

\title{On the Chermak-Delgado lattice\\ of a finite group}
\author{Ryan McCulloch and Marius T\u arn\u auceanu}
\date{May 30, 2019}

\begin{document}

\maketitle

\begin{abstract}
By imposing conditions upon the index of a self-centralizing subgroup of a group, and upon the index of the center of the group, we are able to classify the Chermak-Delgado lattice of the group.  This is our main result.  We use this result to classify the Chermak-Delgado lattices of dicyclic groups and of metabelian $p$-groups of maximal class.
\end{abstract}

{\small
\noindent
{\bf MSC2000\,:} Primary 20D30; Secondary 20D60, 20D99.

\noindent
{\bf Key words\,:} Chermak-Delgado measure, Chermak-Delgado lattice, Chermak-Delgado subgroup, subgroup lattice, (generalized) dicyclic group, metabelian group.}

\section{Introduction}

Throughout this paper, $G$ will denote a finite group.  Denote by
\begin{equation}
m_G(H)=|H||C_G(H)|\nonumber
\end{equation}the \textit{Chermak-Delgado measure} of a subgroup $H$ of $G$ and let
\begin{equation}
m^*(G)={\rm max}\{m_G(H)\mid H\leq G\} \mbox{ and } {\cal CD}(G)=\{H\leq G\mid m_G(H)=m^*(G)\}.\nonumber
\end{equation} The set ${\cal CD}(G)$ forms a modular, self-dual sublattice of the lattice of subgroups of $G$,
which is called the \textit{Chermak-Delgado lattice} of $G$. It was first introduced by Chermak and Delgado \cite{5}, and revisited by Isaacs \cite{7}. In the last years there has been a growing interest in understanding this lattice (see e.g. \cite{1,2,3,4,6,8,9,10,13,14,15,18}). Recall two important properties of the Chermak-Delgado lattice that will be used in our paper:
\begin{itemize}
\item[$\cdot$] if $H\in {\cal CD}(G)$, then $C_G(H)\in {\cal CD}(G)$ and $C_G(C_G(H))=H$;
\item[$\cdot$] the minimum subgroup $M(G)$ of ${\cal CD}(G)$ (called the \textit{Chermak-Delgado subgroup} of $G$) is characteristic, abelian, and contains $Z(G)$.
\end{itemize}

For a positive integer $n\geq 1$, the dicyclic group of order $4n$, usually denoted by $Dic_{4n}$, is defined as
$$Dic_{4n}= \langle a,x\, |\, a^{2n}=1, x^2=a^n, a^x=a^{-1}\rangle.$$This has the following generalization: given an arbitrary abelian group $A$ of order $2n$, the generalized dicyclic group induced by $A$ is defined as
$$Dic_{4n}(A)=\langle A,x\, |\, x^4=1, x^2\in A\setminus \{1\}, a^x=a^{-1}, \forall\, a\in A\rangle.$$Obviously, we have $Dic_{4n}(\mathbb{Z}_{2n})=Dic_{4n}$. It is also easy to see that if $\exp(A)=2$, that is that $A$ is an elementary abelian $2$-group, then $Dic_{4n}(A)$ is abelian and consequently $\CD(Dic_{4n}(A))=\{Dic_{4n}(A)\}$. Note that if $\exp(A)\neq 2$, then
$$Z(Dic_{4n}(A))=\{a\in A\, |\,a^2=1\}\cong\frac{A}{A^2}\,,$$where $A^2=\{a^2\, |\, a\in A\}$.

A finite group $G$ is said to be \textit{metabelian} if the derived subgroup, $G'$, is abelian.  Equivalently, a finite group $G$ is metabelian if there exists an abelian normal subgroup $A$ of $G$ so that $G/A$ is abelian.

A finite $p$-group $G$ of order $p^n$ is said to be \textit{of maximal class} if the nilpotence class of $G$ is $n-1$.  The following results on $p$-groups will be useful to us.  Lemma 1.1 appears in (4.26), \cite{12}, II, Lemma 1.2 appears in Theorem 2.4 of \cite{16}, and Lemma 1.3 at end of \cite{14}.

\begin{lemma}
Any group of order $p^4$ contains an abelian subgroup of order $p^3$.
\end{lemma}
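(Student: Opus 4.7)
The plan is to consider a maximal abelian normal subgroup $A$ of $G$ and to show $|A|\geq p^{3}$, from which any subgroup of order $p^{3}$ inside the abelian group $A$ solves the problem. The key input is the standard $p$-group fact that such an $A$ is self-centralizing, i.e.\ $C_{G}(A)=A$. Indeed, if $A\subsetneq C_{G}(A)$, then $C_{G}(A)/A$ is a non-trivial normal subgroup of the $p$-group $G/A$ and therefore meets $Z(G/A)$ non-trivially; lifting an element of that intersection and adjoining it to $A$ produces an abelian normal subgroup strictly containing $A$, contradicting the choice of $A$.

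Given $C_{G}(A)=A$, conjugation induces an injection $G/A\hookrightarrow\mathrm{Aut}(A)$, and since $G/A$ is a $p$-group, Sylow's theorem places its image inside a Sylow $p$-subgroup of $\mathrm{Aut}(A)$. It therefore suffices to rule out $|A|=p$ and $|A|=p^{2}$ by comparing $p$-parts. If $|A|=p$, then $|\mathrm{Aut}(A)|=p-1$ is coprime to $p$, forcing $G=A$, which contradicts $|G|=p^{4}$. If $|A|=p^{2}$, then $A\cong \mathbb{Z}_{p^{2}}$ or $A\cong\mathbb{Z}_{p}\times\mathbb{Z}_{p}$; in both cases the Sylow $p$-subgroup of $\mathrm{Aut}(A)$ has order exactly $p$, so it cannot accommodate the $p$-group $G/A$ of order $p^{2}$. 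Hence $|A|\geq p^{3}$, as desired.

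The main obstacle is really just recognising the right tool: once one fixes a maximal abelian normal subgroup and invokes its self-centralizing property, what remains is a brief arithmetic comparison inside $\mathrm{Aut}(A)$. A more pedestrian alternative would split on $|Z(G)|\in\{p,p^{2}\}$, then further on $|Z_{2}(G)|$, and in the nilpotence-class-$2$ subcase with $G/Z(G)\cong\mathbb{Z}_{p}^{3}$ exploit that every non-zero alternating form on an odd-dimensional $\mathbb{F}_{p}$-space admits a two-dimensional totally isotropic subspace (corresponding to a commuting pair of generators modulo $Z(G)$); the approach above avoids all of this case work.
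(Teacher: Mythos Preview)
Your argument is correct. The paper itself does not prove this lemma; it merely cites it as (4.26) in Suzuki's \emph{Group Theory}, II, so there is no in-paper proof to compare against directly.

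For what it is worth, your approach---take a maximal abelian normal subgroup $A$, show $C_G(A)=A$ via the standard ``lift a central element of $G/A$'' trick, then bound $|G/A|$ by the $p$-part of $|\mathrm{Aut}(A)|$---is exactly the classical proof one finds in most texts, and is essentially the argument Suzuki gives. The arithmetic in the case $|A|=p^2$ is right: both $\mathrm{Aut}(\mathbb{Z}_{p^2})$ and $\mathrm{GL}_2(\mathbb{F}_p)$ have Sylow $p$-subgroups of order exactly $p$, which is too small to host $G/A$ of order $p^2$. Your closing remark about the alternating-form route is also sound (the radical of a nonzero alternating form on a $3$-dimensional space is nontrivial, and any plane through a radical vector is totally isotropic), and you are right that it is unnecessarily laborious compared to the self-centralizing argument.
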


\begin{lemma}
Suppose $G$ is a $p$-group of order $p^n$ and $G$ is of maximal class.  Then $|Z(G)| = p$, $|G : G' | = p^2$, and for each $2 \leq i \leq n$, we have that $G_i$ is the unique normal subgroup of $G$ order $p^{n-i}$, where $G_i$ is the $i$th term in the lower central series for $G$.
\end{lemma}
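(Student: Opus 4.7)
The plan is to exploit the rigidity of the upper and lower central series when the nilpotence class is forced to be maximal.

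\textbf{Step 1: the factors of the lower central series.} The lower central series $G=G_1>G_2>\cdots>G_{n-1}>G_n=1$ has $n-1$ strict inclusions. Writing $|G_i/G_{i+1}|=p^{a_i}$, the $a_i$ are positive integers summing to $n$ in $n-1$ summands, so exactly one $a_i$ equals $2$ and the rest equal $1$. To pin the $2$ down, I would use the Burnside basis theorem: if $|G/G'|=p$, then $G/G'$ is cyclic, so $G$ is cyclic, contradicting class $n-1\geq 2$. Hence $a_1=2$ and $a_i=1$ for $2\leq i\leq n-1$, giving $|G:G'|=p^2$ and $|G_i|=p^{n-i}$.

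\textbf{Step 2: the order of the center.} Write the upper central series $1=Z_0<Z_1=Z(G)<\cdots<Z_{n-1}=G$; its length is also the class, so it has $n-1$ proper inclusions. The upper central series of $G/Z(G)$ is obtained by quotienting this chain by $Z_1$, so $G/Z(G)$ has class exactly $n-2$. If $|Z(G)|\geq p^2$, then $|G/Z(G)|\leq p^{n-2}$, and since any $p$-group of order $p^m$ has class at most $m-1$, we would get class$(G/Z(G))\leq n-3$, a contradiction. Thus $|Z(G)|=p$, and since $G_{n-1}\leq Z(G)$ with $|G_{n-1}|=p$ from Step 1, we have $Z(G)=G_{n-1}$.

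\textbf{Step 3: uniqueness of the $G_i$.} I would induct on $n$. The case $i=n$ is trivial since $G_n=1$. For $2\leq i\leq n-1$, let $N\trianglelefteq G$ with $|N|=p^{n-i}$. A nontrivial normal subgroup of a $p$-group meets the center nontrivially, and $|Z(G)|=p$ forces $Z(G)=G_{n-1}\leq N$. Now $G/G_{n-1}$ has order $p^{n-1}$ and class $n-2$, hence is of maximal class; by the inductive hypothesis applied to the normal subgroup $N/G_{n-1}$ of order $p^{(n-1)-i}$, we obtain
\[
N/G_{n-1}=G_i(G/G_{n-1})=G_iG_{n-1}/G_{n-1}=G_i/G_{n-1},
\]
where the last equality uses $G_{n-1}\leq G_i$ for $i\leq n-1$. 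Therefore $N=G_i$.

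The main obstacle, as I see it, is not any single calculation but rather calibrating the inductive base and verifying at each reduction that the maximal-class hypothesis is preserved (which is exactly what the shifted upper central series argument in Step 2 guarantees). Once that is in place, the three statements fall out essentially by bookkeeping on the central series.
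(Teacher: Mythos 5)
Your proposal is correct in substance, and it is worth noting that the paper itself does not prove this lemma: it is imported from Theorem 2.4 of the Xu--An--Zhang reference (and is classical), so your self-contained central-series counting argument is a genuine addition rather than a rederivation of an in-paper proof; it is essentially the standard textbook route (counting factor sizes in the lower and upper central series, plus the Burnside basis theorem to force $|G:G'|\geq p^2$, plus ``a minimal normal subgroup lies in the center'' to drive the induction for uniqueness). Two small patches are needed. First, the auxiliary fact in Step 2, that a $p$-group of order $p^m$ has class at most $m-1$, is false for $m=1$ (a group of order $p$ has class $1$), so as written your contradiction does not cover $n=3$; fix it either by treating $n=3$ directly (if $|Z(G)|\geq p^2$ then $G/Z(G)$ is cyclic, forcing $G$ abelian) or by the uniform count: for a nonabelian group of class $c$ one has $|G:Z_{c-1}(G)|\geq p^2$ and every factor $Z_{j+1}/Z_j$ has order at least $p$, which gives $|Z(G)|=p$ for all $n\geq 3$ in one stroke. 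Second, in Step 3 the induction should state its base case $n=3$ explicitly (there a normal subgroup of order $p$ contains, hence equals, $Z(G)=G_2$), and when $n-1=2$ the quotient $G/G_{n-1}$ is abelian of order $p^2$, so the ``maximal class'' inductive hypothesis is only being used in its trivial instance $i=n-1$; relatedly, the whole statement implicitly presupposes $n\geq 3$ (for $n=2$ the center has order $p^2$), which your Step 1 already assumes when it invokes class $n-1\geq 2$. With these edge cases noted, the bookkeeping is sound: the strictness of the lower central series gives exactly one factor of order $p^2$, Burnside's basis theorem places it at the top, and the quotient-by-$G_{n-1}$ induction yields uniqueness of each $G_i$ among normal subgroups of its order.
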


\begin{lemma}
Let $G$ be a $p$-group of maximal class and of order $p^5$.  If $m^*(G) = p^6$, then $\CD(G) = \{G, T, G', A_1, \dots, A_p, Z(T), Z(G)\}$, where $|T| = p^4$, $|G'| = |A_i| = p^3$ for each $i$, $|Z(T)| = p^2$, and $G',A_1,\dots,A_p$ are all abelian and distinct.  Also, none of $A_1,\dots,A_p$ are normal in $G$.
\end{lemma}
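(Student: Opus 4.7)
\medskip

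\noindent\textbf{Proof plan.} I would proceed in four stages.

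\smallskip

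\noindent\textbf{(1) Setup.} Lemma 1.2 gives $|Z(G)|=p$, so $m_G(Z(G))=p\cdot p^5=p^6=m^*(G)$; this puts $Z(G)$, and by self-duality also $G=C_G(Z(G))$, in $\CD(G)$, with $M(G)=Z(G)$. The equation $|H|\cdot |C_G(H)|=p^6$ restricts the remaining possible orders of members of $\CD(G)$ to $p^2,p^3,p^4$, paired by centralization as $(p^2,p^4)$ and $(p^3,p^3)$.

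\smallskip

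\noindent\textbf{(2) The $p^4$ and $p^2$ levels.} For $H\in\CD(G)$ with $|H|=p^4$, I would observe that $H$ is maximal; since $G$ has maximal class, $Z(G)\leq H$ (else $G\cong H\times Z(G)$ would force $Z(H)=1$), and the standard argument gives $C_G(H)\leq H$, so $C_G(H)=Z(H)$ of order $p^2$. Being characteristic in $H\triangleleft G$, Lemma 1.2 identifies $Z(H)=G_3$, whence $H\leq C_G(G_3)$. Since $\mathrm{Aut}(G_3)$ has Sylow $p$-subgroup of order $p$, one obtains $|G:C_G(G_3)|\leq p$, and $G_3\not\leq Z(G)$ forces equality; thus $T:=C_G(G_3)$ has order exactly $p^4$. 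From $[G',G_3]\leq G_5=1$ we get $G'\leq T$, so the normal subgroups of $G$ between $G_3$ and $T$ are just $G_3,G',T$; both $Z(T)=G'$ and $Z(T)=T$ would force $m_G(T)>p^6$, so $Z(T)=G_3$. This puts $T$ and $G_3=Z(T)$ in $\CD(G)$, uniquely at their levels.

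\smallskip

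\noindent\textbf{(3) The $p^3$ level.} I would first rule out non-abelian $H\in\CD(G)$ of order $p^3$: such $H$ has $|Z(H)|=p$, $H\cap C_G(H)=Z(H)$, and $HC_G(H)=G$ with $[H,C_G(H)]=1$; the commutator expansion for commuting subgroups gives $G'=H'\cdot C_G(H)'$ of order at most $p^2$, contradicting $|G'|=p^3$. Conversely, every abelian $A\leq G$ of order $p^3$ has $A\leq C_G(A)$, and the measure bound forces $|C_G(A)|\leq p^3$, so $A$ is self-centralizing and $A\in\CD(G)$. Next I would use lattice closure: $A\cap T\in\CD(G)$. In the subcase $A\leq T$, the abelian product $AZ(T)\leq T$ must equal $A$ (otherwise $AZ(T)=T$ is abelian, contradicting $|Z(T)|=p^2$), giving $G_3=Z(T)\leq A$. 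In the subcase $A\not\leq T$, one has $AT=G$ and $|A\cap T|=p^2$, so $A\cap T=G_3$ (the unique $\CD$-element at that level); then $G_3\leq A$ forces $A\leq C_G(G_3)=T$, a contradiction. So every abelian order-$p^3$ subgroup sits between $G_3$ and $T$.

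\smallskip

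\noindent\textbf{(4) Enumeration and normality.} The subgroups of $T$ of order $p^3$ containing $Z(T)$ correspond to the $p+1$ lines of $T/Z(T)\cong(\Z/p)^2$, and each is abelian because $Z(T)\leq Z(K)$ forces $|K/Z(K)|\leq p$. One of them is $G'$; I label the remaining $p$ as $A_1,\dots,A_p$. The conjugation action of $G/T$ (of order $p$) on $T/Z(T)$ is nontrivial: otherwise $[G,T]\leq Z(T)=G_3$ would put $T/G_3$ in $Z(G/G_3)$ and force $|G'|\leq p^2$. Hence it acts via a nontrivial unipotent element of $GL_2(\mathbb{F}_p)$, fixing exactly one line and permuting the others in a single orbit of size $p$. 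The fixed line is $G'/Z(T)$, so $G'$ is the unique normal member and $A_1,\dots,A_p$ form a $G$-conjugacy class of non-normal subgroups. The main obstacle I anticipate is stage (3): using lattice closure under intersection with $T$ to confine every abelian order-$p^3$ subgroup to $T$ and above $G_3$, thereby avoiding the appearance of stray self-centralizing abelian subgroups outside $T$.
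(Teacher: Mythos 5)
Your proposal is correct, but it is worth noting that the paper does not prove this lemma at all: it is imported from Vieira's thesis \cite{14} (the text of Theorem 4.1 remarks that the uniqueness of the index-$p$ member $T$ is ``nontrivial'' and rests on Lemma 4.5.4 of \cite{14}). Your argument is a self-contained, elementary replacement. The two pivots that make it work are exactly where the cited source does the heavy lifting: (i) for any $H \in \CD(G)$ of index $p$ you get $C_G(H)=Z(H)$ (the ``standard argument'' you invoke is the fact that a maximal subgroup containing $Z(G)$ contains its centralizer, which is fine here since $Z(G)\leq H$), and then Lemma 1.2's uniqueness of normal subgroups forces $Z(H)=G_3$, so $H\leq C_G(G_3)=T$ and $T$ is unique; together with $m_G(T)\leq m^*(G)=p^6$ this also gives $C_G(T)=G_3$ and hence $T, G_3\in\CD(G)$ for free; (ii) the confinement of the order-$p^3$ level, where you rule out non-abelian members by $G'=H'C_G(H)'$ and then use closure of $\CD(G)$ under intersection with $T$ plus uniqueness at the $p^2$ level to force $G_3\leq A\leq T$ for every abelian $A$ of order $p^3$. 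The endgame — the $p+1$ subgroups between $Z(T)$ and $T$ are abelian, the $G$-action factors through $G/T$ (since $T'\leq Z(T)$) and acts as a nontrivial unipotent on $T/Z(T)\cong \mathbb{F}_p^2$, fixing only the line $G'/Z(T)$ — cleanly yields both the count and the non-normality of $A_1,\dots,A_p$. What your route buys is independence from \cite{14}: everything is derived from Lemma 1.2, the sublattice/duality properties of $\CD(G)$, and the hypothesis $m^*(G)=p^6$; it would even let the paper state Lemma 1.3 with proof rather than by citation.
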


\section{Main Results}

In this section we present a result which generalizes Proposition 7 in \cite{10}. It will be used both in Sections 3 and 4.  A subgroup $A$ of $G$ is said to be self-centralizing if $C_G(A) = A$.  An important observation used here is that if $T \in \CD(G)$, then since $Z(G) \leq T$, we have that $|G:T|$ divides $|G:Z(G)|$.  So by imposing conditions on $|G:Z(G)|$, we are able to obtain results about $\CD(G)$.

\begin{theorem}
Let $G$ be a finite group, let $p$ be a prime, and among the self-centralizing subgroups of $G$, let $A$ be one of maximum order.
\begin{itemize}
\item[{\rm 1)}] If $G$ is abelian, then $\CD(G) = \{G\}$.

\item[{\rm 2)}]  If $|G : Z(G)| = p^2$, then $\CD(G) = \{ Z(G), A, A_1, \dots, A_p, G \}$ is a quasi-antichain of width $p+1$, with each $A_i$ abelian.

\item[{\rm 3)}]  If $|G:A| = p$ and $|G : Z(G)| = p^i$ with $i > 2$, then $\CD(G) = \{ A \}$.

If $p$ is the smallest prime divisor of $|G|$ and $|G:A| = p$ and $|G : Z(G)| > p^2$, then $\CD(G) = \{ A \}$.

\item[{\rm 4)}] Suppose $|G:A| = p^2$.

\begin{itemize}
\item[{\rm a)}] If $|G:Z(G)| = p^3$, then $\CD(G) = \{ Z(G), G \}$.

 If $p$ is the smallest prime divisor of $|G|$ and $p^2 < |G:Z(G)| < p^4$, then $\CD(G) = \{ Z(G), G \}$.

\item[{\rm b)}] If $|G:Z(G)| = p^4$, then $$\CD(G) = \{ Z(G), Z(T_1),\dots,Z(T_n),A,A_1,\dots,A_m,T_1,\dots,T_n,G\}$$

where $T_1,\dots,T_n$ ($n \geq 0$) are all of the subgroups of index $p$ in $G$ with centers that have index $p^3$ in $G$, and $A_1,\dots,A_m$ ($m \geq 0$) are all of the subgroups (other than $A$) of index $p^2$ in $G$ with centralizers that have index $p^2$ in $G$.  Furthermore, if $n \geq 1$, then $m\geq p$.

\item[{\rm c)}] If $|G:Z(G)| = p^i$ with $i > 4$ and if $G$ possesses a subgroup, $T$, of index $p$ in $G$ with center that has index $p^3$ in $G$, then $\CD(G) = \{Z(T),A,A_1,\dots,A_p,T\}$ is a quasi-antichain of width $p+1$, with each $A_i$ abelian.

 If $p$ is the smallest prime divisor of $|G|$ and if $|G:Z(G)| > p^4$ and if $G$ possesses a subgroup, $T$, of index $p$ in $G$ with center that has index $p^3$ in $G$, then $\CD(G) = \{Z(T),A,A_1,\dots,A_p,T\}$ is a quasi-antichain of width $p+1$, with each $A_i$ abelian.

 \item[{\rm d)}] If $|G:Z(G)| = p^i$ with $i > 4$ and if $G$ does not possess a subgroup, $T$, of index $p$ in $G$ with center that has index $p^3$ in $G$, then $\CD(G) = \{A\}$.

 If $p$ is the smallest prime divisor of $|G|$ and if $|G:Z(G)| > p^4$ and if $G$ does not possess a subgroup, $T$, of index $p$ in $G$ with center that has index $p^3$ in $G$, then $\CD(G) = \{A\}$.
\end{itemize}
\end{itemize}
\end{theorem}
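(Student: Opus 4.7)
My plan rests on two basic inputs, both noted in the paragraph preceding the theorem. First, if $H \in \CD(G)$ then $Z(G)\le M(G)\le H$, so $|G:H|$ divides $|G:Z(G)|$; under the prime-power hypothesis this cuts $|G:H|$ down to a power of $p$, while under the smallest-prime variants it restricts $|G:H|$ to divisors of $|G|$ of very constrained shape, since any proper divisor of $|G|$ less than $p$ must equal $1$. Second, $m^*(G)\ge m_G(A)=|A|^2$, using that any self-centralizing subgroup is automatically abelian: the equality $C_G(A)=A$ contains the statement $A\subseteq C_G(A)$, which is exactly the statement that every pair of elements of $A$ commutes. Together these reduce each case to a finite analysis of index pairs $(|G:H|,|G:C_G(H)|)$, which I plan to dispose of with three standard tools: (i) the duality $C_G(C_G(H))=H$ valid for members of $\CD(G)$; (ii) the identity $Z(H)=H\cap C_G(H)$ together with the consequence that $H/Z(H)$ cyclic forces $H$ abelian; and (iii) $Z(G)=C_G(H)\cap C_G(K)$ whenever $HK=G$, which lets me read off $|Z(G)|$ from $|H\cap K|$.

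Part 1 is immediate. For Part 2 I would first note that $G/Z(G)\cong(\Z/p)^2$ (cyclic would force $G$ abelian), so $G$ has exactly $p+1$ index-$p$ subgroups, each of the form $\langle x,Z(G)\rangle$ and hence abelian and self-centralizing, with measure $|G|^2/p^2=m_G(Z(G))=m_G(G)$; a case split on $|HZ(G)/Z(G)|\in\{1,p,p^2\}$ kills any other candidate. For Part 3 the bound $m_G(H)\ge|A|^2$ cuts $|G:H|=p^j$ to $j\in\{0,1,2\}$; the cases $j=0,2$ are killed directly by $|G:Z(G)|>p^2$. For $j=1$ I would apply tool (iii) to the distinct pair $(H,A)$ when $H$ is abelian, or to $(H,C_G(H))$ when $H$ is non-abelian, in either case obtaining $|Z(G)|=|G|/p^2$, a contradiction that leaves $H=A$. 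The smallest-prime version requires the extra remark that a prime $q$ with $p<q\le p^2$ would force $|G:C_G(H)|<p$, hence $C_G(H)=G$ and hence $H\le Z(G)$, which is numerically impossible given $|H|=|G|/q>|Z(G)|$.

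Part 4 is the most involved, and I expect Part 4(b) to be the main obstacle. Parts 4(a), (c), (d) all follow the Part 3 template: tools (ii) and (iii) kill candidate pairs whose measure would exceed the target $m^*(G)$, and in Part (c) the hypothesised index-$p$ subgroup $T$ with $|T:Z(T)|=p^2$ lets me apply Part 2 inside $T$ to produce the quasi-antichain. In Part 4(b) the three putative families --- $A$-type (index $p^2$, centralizer of index $p^2$), $T$-type (index $p$, center of index $p^3$), and $\{Z(G),G\}$ --- all realize the same measure $|G|^2/p^4$, so I must simultaneously enumerate all of them and verify that nothing else lies in $\CD(G)$. My plan is first to use tools (ii)--(iii) to kill all pairs $(|G:H|,|G:C_G(H)|)=(p^j,p^k)$ with $j+k<4$ (for each such pair, either $H/Z(H)$ becomes cyclic of order $p$ while $H$ is required to be non-abelian, or $|Z(G)|$ is pinned by tool (iii) at $|G|/p^2$ or $|G|/p^3$), then verify that the pairs with $j+k=4$ are exactly $(4,0),(3,1),(2,2),(1,3),(0,4)$, matching $Z(G)$, $Z(T_i)$, $A$ or $A_j$, $T_i$, $G$ respectively. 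The implication $n\ge 1\Rightarrow m\ge p$ then follows by applying Part 2 inside any $T_i$: since $|T_i:Z(T_i)|=p^2$, $T_i$ has exactly $p+1$ abelian index-$p$ subgroups, and a short bound-on-centralizer computation shows each is self-centralizing of order $|A|$ in $G$, hence an index-$p^2$ member of $\CD(G)$, at least $p$ of which are distinct from $A$.
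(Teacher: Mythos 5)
Your overall strategy---restrict attention to members of $\CD(G)$, which contain $Z(G)$, and run an index-pair analysis using the duality $C_G(C_G(H))=H$, the identity $Z(H)=H\cap C_G(H)$, and $C_G(\langle H,K\rangle)=C_G(H)\cap C_G(K)$---is sound, and your treatments of items 2, 3 and 4.b are essentially correct; indeed your route to $m\geq p$ in 4.b (at least $p$ of the $p+1$ subgroups between $Z(T_i)$ and $T_i$ differ from $A$, and each is self-centralizing in $G$ since $C_G(B)\leq C_G(Z(T_i))=T_i$) is cleaner than the paper's appeal to modularity. The genuine gap is in items 4.c and 4.d. Your summary for 4.a, 4.c, 4.d---``tools (ii) and (iii) kill candidate pairs whose measure would exceed the target''---only eliminates subgroups whose index pair has product strictly below $p^4$ (resp.\ $p^3$). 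The hard part of 4.c and 4.d is the competitors that \emph{attain} the maximal measure but are not on the claimed list. In 4.d nothing in your plan excludes a second subgroup $B\neq A$ of index $p^2$ with $|G:C_G(B)|=p^2$; any such $B$ would automatically lie in $\CD(G)$, so its nonexistence must actually be proved (this is exactly the uniqueness assertion of Corollary 2.2, item 3). In 4.c, ``apply Part 2 inside $T$'' identifies the members lying in the interval between $Z(T)$ and $T$, but it cannot show these are \emph{all} of $\CD(G)$: you must also rule out a second subgroup $T'$ of index $p$ with $|G:Z(T')|=p^3$ (together with its center) and index-$p^2$ members with index-$p^2$ centralizers not contained in $T$---precisely Corollary 2.2, item 2.

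This is the point where the paper invokes the lattice structure of $\CD(G)$: being a sublattice it has a unique largest member, which contains $A$ (resp.\ $T$); since $G\notin\CD(G)$ and index $p$ is impossible in 4.d (resp.\ $T$ is maximal in $G$ in 4.c), that largest member is $A$ (resp.\ $T$), and by duality its centralizer is the least member, confining $\CD(G)$ to $\{A\}$ (resp.\ to the interval $[Z(T),T]$, where your Part 2 argument finishes). If you want to stay inside your elementary toolkit the gap can be closed, but it requires a genuine extra computation, e.g.\ in 4.d: one first checks that a member with index pair $(p^2,p^2)$ must be self-centralizing, and then for $B\neq A$ the subgroup $M=\langle A,B\rangle$ has index $1$ or $p$; index $1$ forces $Z(G)=A\cap B$ of order at least $|G|/p^4$, contradicting $|G:Z(G)|>p^4$, while index $p$ forces $Z(M)=C_G(M)=A\cap B$ of index $p^3$ in $G$, so $M$ is a subgroup of index $p$ whose center has index $p^3$, contradicting the hypothesis of 4.d. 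A similar computation (for instance $m_G(T\cap T')>m^*(G)$ whenever $T\neq T'$ are two index-$p$ members) yields the uniqueness of $T$ in 4.c. Since no step of this kind appears in your plan, 4.c and 4.d are not yet proved as written; relatedly, in 4.a the equal-measure pairs $(p,p^2)$ and $(p^2,p)$ also need explicit exclusion (your tools do handle them, but they are not ``pairs exceeding the target''), and the smallest-prime variants of item 4 need the non-$p$-power divisor bookkeeping that you set up only for item 3.
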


\begin{proof}
Item 1 is clear.

To see item 2, note first that $|G:G||G:Z(G)| = |G:Z(G)| = p^2 \leq |G:T||G:C_G(T)|$ for any subgroup $T$ of $G$ with $Z(G) \leq T$, and so $G,Z(G) \in \CD(G)$.  Note that $G/Z(G) \cong C_p \times C_p$, and so there are exactly $p+1$ subgroups $H$ with $Z(G) < H < G$.  For any such $H$, we have that $H = \langle Z(G), x \rangle$ for some $x \in H \setminus Z(G)$, and thus $H$ is abelian.  It follows that $\CD(G) = \{ Z(G), A, A_1, \dots, A_p, G \}$ is a quasi-antichain of width $p+1$, with each $A_i$ abelian.

We prove both parts of item 3 simultaneously.  Note that $|G:G||G:Z(G)| = |G:Z(G)| > p^2 = |G:A||G:A|$, and so $G,Z(G) \notin \CD(G)$.  Note that $|G:A||G:A| = p^2 \leq |G:T||G:C_G(T)|$ for any subgroup $T$ of $G$ with $Z(G) \leq T$, and so $A \in \CD(G)$.  Now $A$ is a maximal subgroup of $G$ and $G \notin \CD(G)$, and so $A$ is the largest member of $\CD(G)$.   But $A$ is self-centralizing and therefore $A$ is also the least member of $\CD(G)$. Hence $\CD(G)=\{A\}$.

We prove both parts of item 4.a simultaneously.  Note that $|G:A||G:A| = p^4 > |G:Z(G)| = |G:G||G:Z(G)|$, and so $A \notin \CD(G)$.  Since among the self-centralizing subgroups of $G$, $A$ is one of maximum order, it follows that $\CD(G)$ does not contain any self-centralizing subgroups.  Among the abelian subgroups of $\CD(G)$, let $T$ be a maximal one.  If $T \neq Z(G)$, then we have that $T < C_G(T) < G$, and furthermore we have that $|C_G(T) : T|$ cannot be a prime.  This is true because if $|C_G(T) : T|$ is a prime, then $C_G(T)/T$ is cyclic, and hence $C_G(T)$ is abelian, and so $T$ would not be maximal among the abelian subgroups in $\CD(G)$.  It follows that $|G:T||G:C_G(T)| \geq p^4 > |G:G||G:Z(G)|$, a contradiction.  Hence $T=Z(G)$ is the only abelian subgroup in $\CD(G)$.  If $H \in \CD(G)$ is a subgroup of index $p$ in $G$ with $C_G(H)$ a subgroup of index $p^2$ in $G$, then $|C_G(H) : Z(G)| < p^2$, and hence is prime.  But then $C_G(H)$ is abelian, a contradiction.  It follows that $\CD(G) = \{ Z(G), G \}$.

To see item 4.b, note that $p^4 = |G:G||G:Z(G)| = |G:A||G:A|$.   Since among the self-centralizing subgroups of $G$, $A$ is one of maximum order, we have that any other self-centralizing subgroup $A'$ of $\CD(G)$ will have that $p^4 = |G:A'||G:A'|$.  If $T$ is an abelian subgroup of $\CD(G)$ with $T < C_G(T) < G$ and $C_G(T)$ nonabelian, then as we saw in the proof of item 4.a, we have that $|G:T||G:C_G(T)| \geq p^4$.  It follows then that $G,Z(G),A \in \CD(G)$.  And hence $$\{ Z(G), Z(T_1),\dots,Z(T_n),A,A_1,\dots,A_m,T_1,\dots,T_n,G\} \subseteq \CD(G)$$ where $T_1,\dots,T_n$ ($n \geq 0$) are all of the subgroups of index $p$ in $G$ with centers that have index $p^3$ in $G$, and $A_1,\dots,A_m$ ($m \geq 0$) are all of the subgroups (other than $A$) of index $p^2$ in $G$ with centralizers that have index $p^2$ in $G$.

We also have that $$\CD(G) \subseteq \{ Z(G), Z(T_1),\dots,Z(T_n),A,A_1,\dots,A_m,T_1,\dots,T_n,G\}.$$

This is true because, otherwise, if some $H \in \CD(G)$ has that $|G:H|=p$ and $|G:C_G(H)|=p^3$, but $Z(H) < C_G(H)$, then since $Z(H) = H \cap C_G(H) \in \CD(G)$, it would follow that $Z(H) = Z(G)$ and thus $H=G$, a contradiction.

If $n \geq 1$, then for some $T \in \{T_1, \dots, T_n\}$, we have that $Z(T) < A < T$ in $\CD(G)$.  This is true because, otherwise, we would have $Z(G) < A < G$ and $Z(G) < Z(T) < T < G$ as maximal chains in $\CD(G)$, contradicting the modularity of $\CD(G)$.  We can apply item 2 in Theorem 2.1 to the group $T$, to conclude that there are $p$ additional self-centralizing groups in $\CD(T)$, and since all of these are self-centralizing, they are also all in $\CD(G)$.

We prove both parts of item 4.c simultaneously.  Note that $|G:G||G:Z(G)| > p^4 = |G:A||G:A|$, and so $G,Z(G) \notin \CD(G)$.  As we saw before, if $S$ is an abelian subgroup of $\CD(G)$ with $S < C_G(S) < G$ and $C_G(S)$ nonabelian, then $|G:S||G:C_G(S)| \geq p^4$.  It follows that $A,T,Z(T) \in \CD(G)$, and since $T$ is maximal in $G$, we have that $T$ is the largest member of $\CD(G)$.  And so applying Theorem 2.1 item 2 to the group $T$, we obtain that $\CD(G) = \{Z(T),A,A_1,\dots,A_p,T\}$ is a quasi-antichain of width $p+1$, with each $A_i$ abelian.

We prove both parts of item 4.d simultaneously.  As we saw in the proof of item 4.c, $G,Z(G) \notin \CD(G)$ and $A \in \CD(G)$.  But since there is no subgroup of index $p$ in $\CD(G)$, we have that $A$ is the largest member of $\CD(G)$, and since $A$ is self-centralizing, $A=C_G(A)$ is the least member of $\CD(G)$, and we conclude that $\CD(G) = \{ A \}$.
\end{proof}

Items 3, 4.c, and 4.d in Theorem 2.1 have interesting consequences due to the uniqueness of the largest member of the Chermak-Delgado lattice.  We collect them below as a corollary.

\begin{corollary}
Let $G$ be a finite group, let $p$ be a prime, and among the self-centralizing subgroups of $G$, let $A$ be one of maximum order.
\begin{itemize}
\item[{\rm 1)}]  If $|G:A| = p$ and $|G : Z(G)| = p^i$ with $i > 2$, then $A$ is unique.

If $p$ is the smallest prime divisor of $|G|$ and $|G:A| = p$ and $|G : Z(G)| > p^2$, then $A$ is unique.

\item[{\rm 2)}] If $|G:A| = p^2$ and $|G:Z(G)| = p^i$ with $i > 4$ and if $G$ possesses a subgroup, $T$, of index $p$ in $G$ with center that has index $p^3$ in $G$, then $T$ is unique.

 If $p$ is the smallest prime divisor of $|G|$ and if $|G:A| = p^2$ and $|G:Z(G)| > p^4$ and if $G$ possesses a subgroup, $T$, of index $p$ in $G$ with center that has index $p^3$ in $G$, then $T$ is unique.

\item[{\rm 3)}] If $|G:A| = p^2$ and $|G:Z(G)| = p^i$ with $i > 4$ and if $G$ does not possess a subgroup, $T$, of index $p$ in $G$ with center that has index $p^3$ in $G$, then $A$ is unique.

 If $p$ is the smallest prime divisor of $|G|$ and if $|G:A| = p^2$ and $|G:Z(G)| > p^4$ and if $G$ does not possess a subgroup, $T$, of index $p$ in $G$ with center that has index $p^3$ in $G$, then $A$ is unique.
\end{itemize}
\end{corollary}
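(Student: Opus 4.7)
The plan is to derive each part as an immediate consequence of the corresponding item of Theorem 2.1, together with the elementary fact that the Chermak-Delgado lattice $\CD(G)$, being a lattice, has a unique maximum element (namely the join of all its members, computed inside the lattice itself). The trick is to exploit the fact that items 3, 4.c, and 4.d of Theorem 2.1 are phrased in terms of \emph{some} choice of maximum-order self-centralizing subgroup $A$, and \emph{some} choice of subgroup $T$ of index $p$ whose center has index $p^3$; re-running the theorem with a second candidate will force the two candidates to coincide.

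For item 1 of the corollary, fix two self-centralizing subgroups $A$ and $A'$ of $G$ of the common maximum order. Applying item 3 of Theorem 2.1 with the choice $A$ yields $\CD(G) = \{A\}$; applying it with the choice $A'$ yields $\CD(G) = \{A'\}$. Comparing, $A = A'$, so such a subgroup is unique. Item 3 of the corollary is proved in exactly the same way, invoking item 4.d of Theorem 2.1 in place of item 3.

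For item 2 of the corollary, suppose $T$ and $T'$ are both subgroups of index $p$ in $G$ whose centers have index $p^3$ in $G$. Applying item 4.c of Theorem 2.1 with the choice $T$ identifies $T$ as the largest subgroup in the listed $\CD(G) = \{Z(T), A, A_1, \dots, A_p, T\}$, since $T$ has index $p$ in $G$ while $Z(T)$, $A$, and each $A_i$ all have index at least $p^2$. Applying the same item of the theorem with the choice $T'$ in the hypothesis identifies $T'$ as the largest element of $\CD(G)$. By the uniqueness of the maximum of a lattice, $T = T'$.

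There is no substantive obstacle here: the corollary is a direct reading of Theorem 2.1, once one recognizes that the uniqueness of the top element of $\CD(G)$ forces uniqueness of the distinguished subgroup in each scenario. The only mild care needed is in item 2, where one must verify that $T$ genuinely sits strictly above all other listed members of $\CD(G)$, which is clear from the given indices.
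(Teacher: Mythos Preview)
Your proof is correct and matches the paper's approach exactly: the paper introduces the corollary by remarking that items 3, 4.c, and 4.d of Theorem 2.1 ``have interesting consequences due to the uniqueness of the largest member of the Chermak-Delgado lattice,'' which is precisely the mechanism you spell out. Your slightly more explicit ``re-run the theorem with a second candidate'' phrasing is just an unpacking of that one sentence.
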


Examples for items 4.a and 4.b in Theorem 2.1 that are chains of length 1 and length 2 (and so in the case of item 4.b, $n=0$ and $m=0$), can be found in \cite{3}, see Corollary 2.2, Proposition 2.3, Corollary 2.5, and Proposition 2.6 there.

An example for item 4.b in Theorem 2.1 with $n=1$ and $m=p$ will be given in Section 4.

Another example for item 4.b in Theorem 2.1 is found by taking $G$ to be one of the two extraspecial $p$-groups of order $p^5$.  $|Z(G)| = p$ (i.e. $|G:Z(G)| = p^4$) and $G$ possesses a maximal self-centralizing elementary
abelian subgroup of order $p^3$ (i.e. of index $p^2$ in $G$). The Chermak-Delgado lattice of extraspecial $p$-groups is known, see Example 2.8 in \cite{6}.  And so we have that $\CD(G)$  is isomorphic to the subgroup lattice of an elementary
abelian $p$-group of order $p^4$.  Using the notation of item 4.b in Theorem 2.1, we have that $n=p^3+p^2+p+1$ and $m=(p^2+1)(p^2+p+1) - 1$.

Further examples of groups and CD lattices arising from Theorem 2.1 will be explored in the next sections.

\section{Dicyclic groups}

As a consequence of items 2 and 3 in Theorem 2.1, we classify the Chermak-Delgado lattice of $Dic_{4n}(A)$, where $A$ is an abelian group of order $2n$.

Note that $\CD(Dic_{4n}(A)) = \{A\}$ if and only if $|Dic_{4n}(A) : Z(Dic_{4n}(A))| > 4$,
which is equivalent to $|A^2| > 2$. If $A$ is not an elementary abelian
$2$-group, this means that $A$ is not a direct product of an elementary
abelian $2$-group and $\mathbb{Z}_4$.

\begin{theorem}
    Let $A$ be an abelian group of order $2n$ and $Dic_{4n}(A)$ be the generalized dicyclic group induced by $A$. Then we have:
\begin{itemize}
\item[{\rm a)}] If $A$ is not of type $\mathbb{Z}_2^m\times\mathbb{Z}_4$ with $m\in\mathbb{N}$, then $\CD(Dic_{4n}(A))$ is a chain of length $0$, namely $\CD(Dic_{4n}(A))=\{Dic_{4n}(A)\}$ for $\exp(A)=2$, and $\CD(Dic_{4n}(A))=\{A\}$ for $\exp(A)\neq 2$.
\item[{\rm b)}] If $A$ is of type $\mathbb{Z}_2^m\times\mathbb{Z}_4$ with $m\in\mathbb{N}$, then $\CD(Dic_{4n}(A))$ is a quasi-antichain of width $3$, namely the lattice interval between $Z(Dic_{4n}(A))$ and $Dic_{4n}(A)$.
\end{itemize}
\end{theorem}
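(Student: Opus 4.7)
The plan is to derive everything from Theorem 2.1 applied with $p=2$ to $G := Dic_{4n}(A)$. First I would handle the trivially abelian case $\exp(A)=2$: the defining relation $a^x = a^{-1}$ collapses to $a^x = a$, so $x$ centralizes $A$ and $G$ is abelian, whence item 1 of Theorem 2.1 gives $\CD(G) = \{G\}$. This accounts for one half of (a), so for the remainder I assume $\exp(A) \neq 2$.

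The first real step is to identify $A$ itself as a self-centralizing subgroup of $G$ of maximum order with $|G:A| = 2$. Since $A$ is abelian we have $A \leq C_G(A)$; if equality failed then $C_G(A) = G$ and the outside generator $x$ would satisfy $a = a^x = a^{-1}$ for every $a \in A$, contradicting $\exp(A) \neq 2$. Hence $C_G(A) = A$, and because $G$ itself has nontrivial centre (so is not self-centralizing) while $A$ is maximal in $G$, no self-centralizing subgroup of $G$ is larger than $A$.

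The second step is to compute $|G:Z(G)|$. Using the formula $Z(G) = \{a \in A : a^2 = 1\}$ recalled in the introduction, $Z(G)$ is the kernel of squaring on $A$, so $|Z(G)| = |A|/|A^2|$ and therefore $|G:Z(G)| = 2|A^2|$. All hypotheses of Theorem 2.1 are now in place with $p=2$ (the smallest prime divisor of $|G|$) and $|G:A| = p$, and I would conclude by splitting on $|A^2|$. If $|A^2| > 2$ then $|G:Z(G)| > p^2$, so the second sub-item of item 3 of Theorem 2.1 delivers $\CD(G) = \{A\}$, completing (a) in the non-abelian range. If $|A^2| = 2$ then $|G:Z(G)| = p^2$, so item 2 of Theorem 2.1 yields a quasi-antichain of width $3$; because $G/Z(G) \cong C_2 \times C_2$ this quasi-antichain is exactly the lattice interval between $Z(G)$ and $G$, which is (b).

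The only loose end is the elementary observation that $|A^2| = 2$ if and only if $A \cong \mathbb{Z}_2^m \times \mathbb{Z}_4$, read off from the structure theorem: writing $A = \prod \mathbb{Z}_{n_i}$ gives $|A^2| = \prod n_i/\gcd(2,n_i)$, and this product equals $2$ precisely when one $n_i$ is $4$ and the rest are $2$. No single step is a serious obstacle; the main subtlety is matching each regime of $|A^2|$ with the correct sub-item of Theorem 2.1, and in particular remembering that $p=2$ being the smallest prime divisor of $|G|$ is what lets us bypass the strict ``$|G:Z(G)| = p^i$, $i>2$'' clauses of item 3.
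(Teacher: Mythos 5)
Your proposal is correct and follows essentially the same route as the paper: the paper also observes that $A$ is a self-centralizing subgroup of index $2$ (when $\exp(A)\neq 2$), uses $Z(Dic_{4n}(A))\cong A/A^2$ to get $|G:Z(G)|=2|A^2|$, and then invokes items 2 and 3 of Theorem 2.1 with $p=2$ (the smallest prime divisor of $|G|$), splitting on whether $|A^2|=2$, i.e.\ whether $A\cong \mathbb{Z}_2^m\times\mathbb{Z}_4$. Your write-up just makes explicit the verifications (self-centralizing property of $A$, the index computation, and the classification of abelian $A$ with $|A^2|=2$) that the paper leaves as brief remarks.
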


As a corollary, we describe the Chermak-Delgado lattice of $Dic_{4n}$.

\begin{corollary}
    Under the previous notation, we have:
\begin{itemize}
\item[{\rm a)}] If $n\neq 2$, then $\CD(Dic_{4n})$ is a chain of length $0$, namely $\CD(Dic_{4n})=\{Dic_{4n}\}$ for $n=1$, and $\CD(Dic_{4n})=\{\langle a\rangle\}$ for $n\geq 3$.
\item[{\rm b)}] If $n=2$, then $\CD(Dic_{4n})$ is a quasi-antichain of width $3$, namely the lattice interval between $Z(Dic_{4n})$ and $Dic_{4n}$.
\end{itemize}
\end{corollary}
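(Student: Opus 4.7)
The plan is to deduce the corollary directly from Theorem 3.1 applied to the special case $A = \mathbb{Z}_{2n}$, so that $Dic_{4n}(A) = Dic_{4n}$ and $\langle a\rangle = A$ is the distinguished cyclic subgroup of index 2. All the work reduces to classifying, in terms of $n$, the isomorphism type of the cyclic group $\mathbb{Z}_{2n}$ relative to the two dichotomies appearing in Theorem 3.1: being elementary abelian of exponent 2, and being of the form $\mathbb{Z}_2^m\times\mathbb{Z}_4$.

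First I would observe that since $\mathbb{Z}_{2n}$ is cyclic, it is elementary abelian of exponent 2 if and only if $2n=2$, i.e.\ $n=1$; and a direct product $\mathbb{Z}_2^m\times\mathbb{Z}_4$ is cyclic if and only if $m=0$, in which case the group has order 4, so $\mathbb{Z}_{2n}\cong\mathbb{Z}_2^m\times\mathbb{Z}_4$ holds precisely when $n=2$. This two-line classification is the only non-cosmetic content in the proof, and it is the entirety of the ``hard part,'' which is really routine.

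Next I would split into three cases and invoke Theorem 3.1 in each. For $n=1$, we are in the elementary-abelian branch of part (a): $\exp(A)=2$, so $\CD(Dic_4)=\{Dic_4\}$. For $n\geq 3$, $A$ is neither elementary abelian of exponent 2 nor of type $\mathbb{Z}_2^m\times\mathbb{Z}_4$, so the second clause of part (a) applies and gives $\CD(Dic_{4n})=\{A\}=\{\langle a\rangle\}$. For $n=2$, $A\cong\mathbb{Z}_4=\mathbb{Z}_2^0\times\mathbb{Z}_4$, so part (b) of Theorem 3.1 applies and $\CD(Dic_8)$ is the quasi-antichain of width 3 consisting of the lattice interval $[Z(Dic_8),Dic_8]$.

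Assembling the three cases yields exactly the two statements (a) and (b) of the corollary. No further computation of subgroups of $Dic_{4n}$ is needed, since Theorem 3.1 has already described $\CD(Dic_{4n}(A))$ in complete generality; the corollary is just the specialization $A=\mathbb{Z}_{2n}$ together with the elementary observation about which cyclic groups have order 2 or order 4.
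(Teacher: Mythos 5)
Your proof is correct and matches the paper's intent exactly: the paper states this corollary as an immediate specialization of Theorem 3.1 to $A=\mathbb{Z}_{2n}$, and your classification of when the cyclic group $\mathbb{Z}_{2n}$ has exponent $2$ ($n=1$) or is of type $\mathbb{Z}_2^m\times\mathbb{Z}_4$ ($n=2$, $m=0$) is precisely the routine observation needed.
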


The following appears in \cite{10}.

\begin{theorem}
    Let $G$ be a finite group which can be written as $G=AB$,
    where $A$ and $B$ are abelian subgroups of relatively prime
    orders and $A$ is normal. Then
    \begin{equation}
    m(G)=|A|^2|C_B(A)|^2 \mbox{ and }   {\cal CD}(G)=\{AC_B(A)\}.\nonumber
    \end{equation}
\end{theorem}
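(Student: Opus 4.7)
Set $H := AC_B(A)$. My plan is to show $\CD(G) = \{H\}$; both claims then follow at once since $m_G(H) = |H|^2 = |A|^2|C_B(A)|^2$.

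First I would identify $H$ with $C_G(A)$ and verify that it is abelian and self-centralizing. Since $A \cap B = 1$ by coprimality, every $g \in G$ factors uniquely as $g = ab$ with $a \in A$, $b \in B$; such a $g$ centralizes $A$ iff $b$ does, so $C_G(A) = AC_B(A) = H$. Because $A$ and $C_B(A)$ are abelian of coprime orders that centralize each other, $H = A \times C_B(A)$ is abelian. The same decomposition shows that every $g \in G$ centralizes each $c \in C_B(A)$ (as $cab = acb = abc$, using that $c$ centralizes $A$ and commutes with $b$ in the abelian $B$), hence $C_G(C_B(A)) = G$ and $C_G(H) = C_G(A) \cap G = H$. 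Thus $m_G(H) = |H|^2$.

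Next, for arbitrary $K \le G$, set $U := K \cap A$. Since $A \trianglelefteq G$, $U$ is normal in $K$ of coprime index, and Schur--Zassenhaus yields a complement $L$. The complement $L$ is a $\pi$-subgroup of the solvable group $G$ with $\pi = \pi(B)$, and $B$ is a Hall $\pi$-subgroup, so by Hall's theorem $L$ lies in a conjugate of $B$; by conjugation-invariance of $m_G$ we may assume $V := L \le B$ and $K = UV$. A direct computation with the unique $AB$-decomposition gives $C_G(U) = A \cdot C_B(U)$, $C_G(V) = C_A(V) \cdot B$, and intersecting by uniqueness, $C_G(K) = C_A(V) \cdot C_B(U)$; whence $m_G(K) = |U||V||C_A(V)||C_B(U)|$.

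The heart of the proof is then the pair of inequalities
\[
|U|\cdot|C_B(U)| \le |A|\cdot|C_B(A)| \qquad\text{and}\qquad |V|\cdot|C_A(V)| \le |A|\cdot|C_B(A)|,
\]
each strict unless $U = A$ (resp.\ $V = C_B(A)$); multiplying yields $m_G(K) \le |A|^2|C_B(A)|^2$ with equality iff $K = H$, giving $\CD(G) = \{H\}$. The hard part will be the first inequality. Setting $\overline{B} := B/C_B(A)$, which acts faithfully on $A$ with coprime order, it is equivalent to $|C_{\overline{B}}(U)| \le |A/U|$ for every $U \le A$. The stabilizer $C_{\overline{B}}(U)$ fixes $U$ pointwise, and by the coprime cohomology lemma any element that also acts trivially on $A/U$ must act trivially on $A$ and hence be trivial; so $C_{\overline{B}}(U)$ embeds faithfully in $\mathrm{Aut}(A/U)$. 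The bound then follows from the standard fact that an abelian coprime subgroup of $\mathrm{Aut}(A')$ acting faithfully on a nontrivial abelian $A'$ has order strictly less than $|A'|$: decomposing each $p$-part via Maschke and Schur's lemma, such a group embeds in a product of cyclic groups $\mathbb{F}_{p^{d_i}}^{\times}$ of orders $p^{d_i}-1 < p^{d_i}$. The second inequality is then reduced to the first by factoring $V = (V \cap C_B(A))\,\overline{V}$ with $C_A(V) = C_A(\overline{V})$ and applying the first with $\overline{V}$ and $C_A(\overline{V})$ in the roles of $\overline{B}$ and $U$.
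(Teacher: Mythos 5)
Your argument is essentially correct, but be aware that the paper you are working from does not actually prove this statement: Theorem 3.3 is quoted from reference \cite{10}, so there is no in-paper proof to match your approach against. Taken on its own terms, your proof is a sound, self-contained argument: $H=AC_B(A)=C_G(A)$ is abelian and self-centralizing (indeed $C_B(A)\le Z(G)$); an arbitrary $K\le G$ may, after conjugation, be written as $UV$ with $U=K\cap A$ and $V\le B$ (Schur--Zassenhaus plus Hall conjugacy in the metabelian, hence solvable, group $G$); the unique $ab$-decomposition gives $m_G(K)=|U||V||C_A(V)||C_B(U)|$; and the two coprime-action inequalities, proved via the stability lemma and the Schur-type bound, yield $m_G(K)\le |A|^2|C_B(A)|^2$ with equality exactly at $H$.

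Three small points should be tightened, none of them fatal. First, in your ``standard fact,'' Maschke and Schur apply to an $\mathbb{F}_p$-vector space; when $A/U$ has a non-elementary abelian $p$-part $P$ you must first pass to $P/\Phi(P)$, using that a coprime automorphism acting trivially on $P/\Phi(P)$ is trivial, and then $|Q|\le\prod_i(p^{d_i}-1)<|P/\Phi(P)|\le |P|$; as written your justification silently assumes elementary abelian parts. Second, the phrase ``factoring $V=(V\cap C_B(A))\,\overline{V}$'' is not literally correct: $\overline{V}$ is a quotient, and a complement to $V\cap C_B(A)$ inside $V$ need not exist (e.g.\ $V$ cyclic of order $4$ meeting $C_B(A)$ in its subgroup of order $2$); what you actually need and use is only $|V|=|V\cap C_B(A)|\cdot|\overline{V}|$ together with $C_A(V)=C_A(\overline{V})$, and with that phrasing the reduction to the first inequality (applied to the abelian coprime group $\overline{V}$ acting faithfully on $A$ with $U=C_A(\overline{V})$) works, including the equality analysis $V=C_B(A)$. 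Third, your equality case identifies the \emph{conjugated} $K$ with $H$; to conclude $\CD(G)=\{H\}$ for the original subgroup you should add the one-line remark that $H=C_G(A)$ is normal in $G$ (since $A$ is), so conjugating back changes nothing.
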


\begin{corollary}
Suppose $A$ is a finite abelian group.  There exists a non-abelian finite group $G$ so that $\CD(G) = \{ A \}$ if and only if $A \neq 1$, $A \ncong \mathbb{Z}_2$, $A \ncong \mathbb{Z}_4$, and $A \ncong\mathbb{Z}_2 \times \mathbb{Z}_4$.
\end{corollary}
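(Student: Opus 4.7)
The plan is to establish necessity by a case analysis of the forbidden $A$'s and sufficiency by producing, in each remaining case, a non-abelian $G$ via either Theorem 3.1 or Theorem 3.3. Throughout I use that $\CD(G)=\{A\}$ forces $A$ to be self-centralizing, since $A$ is then both the minimum and the maximum of the self-dual lattice $\CD(G)$; hence $G/A$ embeds into $\mathrm{Aut}(A)$.

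For necessity, the cases $A=1$ and $A=\mathbb{Z}_2$ follow because $\mathrm{Aut}(A)$ is trivial, so $G=A$ must be abelian. For $A=\mathbb{Z}_4$ we have $[G:A]\le 2$, and the only non-abelian candidates are $D_8$ and $Q_8$; a direct computation shows that neither has $\CD=\{\mathbb{Z}_4\}$. The delicate case is $A=\mathbb{Z}_2\times\mathbb{Z}_4$, where $\mathrm{Aut}(A)\cong D_8$. Two observations drive the argument: the characteristic subgroup $A^2=\{a^2:a\in A\}\cong\mathbb{Z}_2$ is pointwise fixed by every automorphism, so $A^2\le Z(G)$ and $|Z(G)|\ge 2$; and a direct enumeration of the five involutions of $D_8$ shows each has fixed subgroup in $A$ of order exactly $4$. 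Since $Z(G)\le A$ equals the subgroup of $A$ fixed by the conjugation action of $G/A$, if $[G:A]=2$ then $|Z(G)|=4$ and $m_G(Z(G))=64=m_G(A)$, putting the proper subgroup $Z(G)$ into $\CD(G)$; if $[G:A]\ge 4$ then $m_G(Z(G))\ge 2|G|\ge 64=m_G(A)$, so either $A\notin\CD(G)$ or another subgroup joins it. Either way $\CD(G)\ne\{A\}$.

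For sufficiency I would split the remaining $A$'s into four families. If $|A|$ is odd and $A\ne 1$, take $G=A\rtimes\langle t\rangle$ with $t$ acting by inversion; Theorem 3.3 with $B=\langle t\rangle$ gives $C_B(A)=1$, hence $\CD(G)=\{A\}$. If $|A|$ is even with $\exp(A)\ne 2$ and $A$ is not of type $\mathbb{Z}_2^m\times\mathbb{Z}_4$, Theorem 3.1(a) directly gives $G=\mathrm{Dic}_{4n}(A)$. If $A=\mathbb{Z}_2^m$ with $m\ge 2$, pick a cyclic subgroup $B$ of $\mathrm{GL}_m(2)$ of odd order acting on $A$ without non-zero fixed vector (for example a Singer cycle, or $B=\mathbb{Z}_3\le\mathrm{GL}_2(2)$ giving $G=A_4$); then $G=A\rtimes B$ has $C_B(A)=1$ and Theorem 3.3 applies. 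Finally, if $A=\mathbb{Z}_2^m\times\mathbb{Z}_4$ with $m\ge 2$, write $A=A_0\times\mathbb{Z}_4$ with $A_0=\mathbb{Z}_2^m$, let $B$ act on $A_0$ as above and trivially on $\mathbb{Z}_4$, and take $G=A\rtimes B$; then $C_B(A)=C_B(A_0)=1$ and Theorem 3.3 again gives $\CD(G)=\{A\}$.

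The principal obstacle is the $A=\mathbb{Z}_2\times\mathbb{Z}_4$ case of necessity, since here $G$ may have order up to $64$ and one must rule out each subgroup of $D_8$ as $G/A$; the uniform bound $|\mathrm{fix}(\phi)|=4$ for every involution of $D_8$, together with $A^2\le Z(G)$, is what makes $m_G(Z(G))\ge m_G(A)$ unavoidable for every admissible index $[G:A]\in\{2,4,8\}$.
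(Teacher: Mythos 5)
Your proposal is correct and takes essentially the same approach as the paper: necessity via the embedding $G/A\hookrightarrow \mathrm{Aut}(A)$ for the self-centralizing $A$, with the key fact that every involution of $\mathrm{Aut}(\mathbb{Z}_2\times\mathbb{Z}_4)$ fixes a subgroup of order $4$, and sufficiency via inversion actions, generalized dicyclic groups (Theorem 3.1), and odd-order coprime actions through Theorem 3.3. The only variations are bookkeeping ones --- the paper pins down $|G|=16$ and $|Z(G)|=2$ from the measure of $G$ before invoking the fixed-point count and handles odd $|A|$ via Theorem 2.1(3) rather than Theorem 3.3, while you split on $|G:A|$ --- and these are immaterial.
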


\begin{proof}
$\rightarrow$ Suppose $\CD(G) = \{ A \}$ and $G$ is non-abelian.   If $A =1$, then $G = 1$ is abelian, a contradiction.  Since $A$ is self-centralizing and normal in $G$, we have that $G/A$ embeds into $Aut(A)$.   If $A \cong \mathbb{Z}_2$, then $G/A$ embeds into $Aut(A) = 1$, which contradicts $G$ being non-abelian.   If $A \cong \mathbb{Z}_4$, then $G/A$ embeds into $Aut(A)$ which has order $2$, and so $|G|=8$ and $CD(G)$ would not be a chain of length zero by item 2 in Theorem 2.1.  Suppose $A \cong \mathbb{Z}_2 \times \mathbb{Z}_4$.  Then $G/A$ embeds into $Aut(A)$ which is a dihedral group of size $8$.  And so $G$ is a $2$-group.  Now, $m^*(G) = 64$, and since $G \notin \CD(G)$ and $G$ has a nontrivial center, it follows that $|G| = 16$ and $|Z(G)| = 2$.  Note, however, that given $x \in Aut(A)$ of order 2, $x$ must fix at least four elements of $A$.  This is due to the structure of $Aut(A)$ when $A \cong \mathbb{Z}_2 \times \mathbb{Z}_4$.  And so $|Z(G)| > 2$, a contradiction.

$\leftarrow$  If $|A|$ is odd, then let $G = A \rtimes \langle x \rangle$ where $x$ inverts each element of $A$.  Then by item 3 in Theorem 2.1, $\CD(G) = \{ A \}$.  If $|A|$ is even, and $A$ is not of type $\mathbb{Z}_2^m\times\mathbb{Z}_4$ with $m \geq 0$, and $\exp(A) \neq 2$, then $G = Dic_{4n}(A)$ has that $\CD(G) = \{ A \}$ by Theorem 3.1.  If $A \cong \mathbb{Z}_2^m$ with $m \geq 2$ or $A \cong \mathbb{Z}_2^m \times \mathbb{Z}_4$ with $m \geq 2$, then there exists $x \in Aut(A)$ of order 3.  Let $G = A \rtimes \langle x \rangle$.  One can apply Theorem 3.3 to get that $\CD(G) = \{ A \}$.
\end{proof}

Finally, we present a proposition also applicable to describing $\CD(Dic_{4n})$.

\begin{proposition}
Let $G$ be a finite group.  If $G = HZ(G)$ for some subgroup $H$ of $G$, then
$\CD(G)$ and $\CD(H)$ are lattice isomorphic with $\CD(G) = \{ XZ(G) \,\, | \,\, X
\in \CD(H) \}$.
\end{proposition}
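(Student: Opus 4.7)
The plan is to prove that the map $\phi\colon X\mapsto XZ(G)$ yields the claimed lattice isomorphism, via three main steps.

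First, I would exploit the hypothesis $G=HZ(G)$ to establish two structural facts. (i) $Z(H)\le Z(G)$: each $h\in Z(H)$ centralizes $H$ and trivially centralizes $Z(G)$, hence centralizes $HZ(G)=G$. In particular $Z(G)\cap H=Z(H)$. (ii) For any $X\le H$, $C_G(X)=C_H(X)\cdot Z(G)$: writing $g=hz$ with $h\in H$ and $z\in Z(G)$, the element $g$ centralizes $X$ iff $h$ does.

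Second, I would show that $\phi$ is a bijection between the set of subgroups of $H$ containing $Z(H)$ and the set of subgroups of $G$ containing $Z(G)$, with inverse $K\mapsto K\cap H$. The inverse check uses Dedekind's modular law: $XZ(G)\cap H=X(Z(G)\cap H)=X\cdot Z(H)=X$ when $Z(H)\le X$; and $(K\cap H)Z(G)=K$ when $Z(G)\le K$, since writing each $k\in K$ as $hz$ with $z\in Z(G)\le K$ forces $h=kz^{-1}\in K\cap H$. Preservation of joins by $\phi$ is immediate because $Z(G)$ is central; preservation of meets follows from a similar Dedekind argument, using $Z(H)\le X_i$ to absorb the correction term.

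Third, I would translate the Chermak-Delgado measure across $\phi$. For any $X$ with $Z(H)\le X$, combining (ii) with $X\cap Z(G)=X\cap Z(H)=Z(H)$ yields
\[
m_G(XZ(G))=\frac{|X|\,|Z(G)|}{|Z(H)|}\cdot\frac{|C_H(X)|\,|Z(G)|}{|Z(H)|}=\frac{|Z(G)|^2}{|Z(H)|^2}\,m_H(X).
\]
Since every member of $\CD(G)$ contains $M(G)\ge Z(G)$ and every member of $\CD(H)$ contains $Z(H)$, the bijection $\phi$ restricts to exactly the subgroups eligible to lie in the Chermak-Delgado lattices. The positive scalar $|Z(G)|^2/|Z(H)|^2$ then forces $X$ to maximize $m_H$ precisely when $\phi(X)$ maximizes $m_G$, giving $\phi(\CD(H))=\CD(G)$ as a lattice isomorphism.

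The main obstacle is really securing the two structural identities $Z(H)\le Z(G)$ and $C_G(X)=C_H(X)Z(G)$, as these are exactly what make the Chermak-Delgado measure transform by a clean scalar under $\phi$. Once they are in place, the remainder of the argument is careful bookkeeping with Dedekind's law.
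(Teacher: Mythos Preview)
Your argument is correct and follows essentially the same route as the paper: both exploit the natural correspondence between the intervals $[Z(H),H]$ and $[Z(G),G]$ coming from $G/Z(G)\cong H/Z(H)$, which is exactly your map $\phi\colon X\mapsto XZ(G)$. The paper states this correspondence tersely via the second isomorphism theorem and simply asserts that it restricts to $\CD(H)\to\CD(G)$, whereas you supply the missing justification by computing explicitly that $m_G(\phi(X))=(|Z(G)|/|Z(H)|)^2\,m_H(X)$; this scalar relation, together with your centralizer identity $C_G(X)=C_H(X)Z(G)$, is precisely what makes the restriction claim valid, so your version is in fact the more complete of the two.
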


\begin{proof}
Since $G=HZ(G)$, we have that $G/Z(G) \cong H/(Z(G) \cap H) = H/Z(H)$ since $Z(H)
\leq Z(G)$.  This natural isomorphism between $G/Z(G)$ and $H/Z(H)$ induces a
lattice isomorphism between the subgroup intervals $[G:Z(G)]$ and $[H:Z(H)]$, and
this lattice isomorphism restricts to a lattice isomorphism between $\CD(G)$ and
$\CD(H)$.  The result follows.
\end{proof}

\section{Metabelian $p$-groups of maximal class}

By applying Lemmas 1.1, 1.2, 1.3 and Theorem 2.1, we obtain a classification of the Chermak-Delgado lattices of a metabelian $p$-groups of maximal class.

\begin{theorem}
Suppose $G$ is a metabelian $p$-group of order $p^n$ and of maximal class.

\begin{itemize}
\item [{\rm 1 )}] If $n=3$, then $\CD(G) = \{Z(G),A,A_1,\dots,A_p,G\}$ is a quasi-antichain of width $p+1$, with each $A_i$ abelian.

\item [{\rm 2 )}] If $G$ possesses an abelian subgroup $A$ so that $|G:A| =p$ and $n > 3$, then $\CD(G) = \{A\}$.

\item [{\rm 3 )}] Suppose that $G$ does not possess an abelian subgroup $A$ so that $|G:A| =p$. Then we have that $n > 4$, and, furthermore:
\begin{itemize}
\item[{\rm a)}] If $|G| = p^5$, then  $\CD(G) = \{Z(G), Z(T), G', A_1, \dots, A_p, T, G\}$, where $|T| = p^4$, $|G'| = |A_i| = p^3$ for each $i$, $|Z(T)| = p^2$, and $G',A_1,\dots,A_p$ are all abelian and distinct.  Also, none of $A_1,\dots,A_p$ are normal in $G$.
\item[{\rm b)}] If $|G| > p^5$ and $G$ possesses a subgroup $T$ of index $p$ so that $|T:Z(T)| = p^2$, then $\CD(G) = \{ Z(T), G',A_1,\dots,A_p, T \}$, where $|T| = p^{n-1}$, $|G'| = |A_i| = p^{n-2}$ for each $i$, $|Z(T)| = p^{n-3}$, and $G',A_1,\dots,A_p$ are all abelian and distinct.  Also, none of $A_1,\dots,A_p$ are normal in $G$.
\item[{\rm c)}] If $|G| > p^5$ and $G$ does not possess a subgroup $T$ of index $p$ so that $|T:Z(T)| = p^2$, then $\CD(G) = \{ G' \}$.
\end{itemize}
\end{itemize}
\end{theorem}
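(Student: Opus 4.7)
My plan is to reduce each case to the appropriate part of Theorem 2.1, invoking Lemma 1.3 directly for item 3a. Throughout I use Lemma 1.2, which gives $|Z(G)| = p$, $|G:G'| = p^2$, $|G'| = p^{n-2}$, and identifies $G'$ as the unique normal subgroup of $G$ of order $p^{n-2}$.

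Items 1 and 2 are quick. For item 1, with $n=3$, every maximal subgroup $A$ of $G$ has order $p^2$, hence is abelian, and is self-centralizing since $C_G(A) = G$ would force $A \leq Z(G)$ (impossible, as $|A| > |Z(G)|$). Since $|G:Z(G)| = p^2$, Theorem 2.1 item 2 yields the conclusion. For item 2, the given abelian $A$ of index $p$ is self-centralizing by the same argument, is of maximum order among self-centralizing subgroups (because $G$ itself is nonabelian), and satisfies $|G:Z(G)| = p^{n-1} > p^2$; Theorem 2.1 item 3 then gives $\CD(G) = \{A\}$.

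The substantive work is item 3. Lemma 1.1 rules out $n=4$, and $n=3$ is impossible because maximal subgroups of a group of order $p^3$ are automatically abelian; so $n > 4$. The key structural step is to show that $G'$ is a self-centralizing subgroup of maximum order in $G$, with $|G:G'| = p^2$. Since $G$ is metabelian, $G' \leq C_G(G')$; since $G'$ is normal, $C_G(G')$ is normal and contains $G'$, so its index in $G$ is $1$, $p$, or $p^2$. Index $1$ forces $G' \leq Z(G)$, impossible; index $p$ forces $C_G(G')$ to be abelian (its center contains $G'$, and the quotient is cyclic of order at most $p$), contradicting the no-abelian-of-index-$p$ hypothesis of item 3. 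Thus $C_G(G') = G'$. The same hypothesis rules out any self-centralizing subgroup of index $p$ in $G$, since abelian index-$p$ subgroups are forbidden and nonabelian ones cannot be contained in their own centralizer. Hence $A = G'$ attains the maximum.

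With $A = G'$ in hand, items 4b, 4c, 4d of Theorem 2.1 dispatch the three subcases. For 3a ($n=5$, so $|G:Z(G)| = p^4$), item 4b forces $G \in \CD(G)$, giving $m^*(G) = |G||Z(G)| = p^6$; this is precisely the hypothesis of Lemma 1.3, which then supplies the explicit lattice. For 3b ($n > 5$ with $T$ of index $p$ and $|T:Z(T)| = p^2$, so $|G:Z(T)| = p^3$), item 4c gives $\CD(G) = \{Z(T), G', A_1, \ldots, A_p, T\}$ with each $A_i$ abelian; the stated orders follow from $|G'| = p^{n-2}$, $|T| = p^{n-1}$, and $|T:Z(T)| = p^2$. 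The subtlest point, and the step I expect to be the main obstacle, is the non-normality of the $A_i$ in $G$: each $A_i$ has order $p^{n-2} = |G'|$, and Lemma 1.2 makes $G'$ the unique normal subgroup of $G$ of that order, so $A_i$ normal would force $A_i = G'$, contradicting distinctness in the quasi-antichain. For 3c, item 4d applied with $A = G'$ gives $\CD(G) = \{G'\}$ directly.
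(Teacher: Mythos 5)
Your proposal is correct and follows essentially the same route as the paper: items 1 and 2 via Theorem 2.1 items 2 and 3, item 3 by taking $A = G'$ (shown self-centralizing of maximum order via the metabelian hypothesis and Lemma 1.2) and applying Theorem 2.1 items 4b, 4c, 4d, with Lemma 1.3 supplying the explicit lattice in case 3a and the uniqueness statement of Lemma 1.2 giving the non-normality of the $A_i$. The only difference is that you spell out the verifications (e.g.\ that $C_G(G') = G'$ and that no self-centralizing subgroup of index $p$ or $1$ exists) which the paper dismisses as a straightforward application of Lemma 1.2 and Theorem 2.1.
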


\begin{proof}
Items 1 and 2 follow directly from items 2 and 3 in Theorem 2.1.  The fact that $n > 4$ in item 3 follows from Lemma 1.1.  Most of Theorem 4.1 part 3 is a straight forward application of Lemma 1.2 and Theorem 2.1 taking $A = G'$.  In item 3.a, the fact that there is a unique subgroup, $T$, of index $p$ in $G$ in $\CD(G)$ is a nontrivial result that follows from Lemma 1.3 which in turn relies on Lemma 4.5.4 in \cite{14}.  In items 3.a and 3.b, the fact that none of $A_1,\dots,A_p$ are normal in $G$ follows from Lemma 1.2.
\end{proof}

Using Theorem 4.1, we are able to give some necessary and sufficient conditions under which the Chermak-Delgado lattice of a metabelian $p$-group of maximal class is a chain of length $0$.

\begin{corollary}
   Let $G$ be a metabelian $p$-group of maximal class.  Then $\CD(G)$ is a chain of length $0$ if and only if either $G$ possesses an abelian subgroup of index $p$ and $|G: Z(G)|\neq p^2$, or $G$ does not possess abelian subgroups of index $p$, $|G|>p^5$, and $|M:Z(M)|> p^2$ for any maximal subgroup $M$ of $G$.
\end{corollary}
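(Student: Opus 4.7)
The plan is to deduce the corollary directly from the complete case analysis in Theorem 4.1, after translating the numerical hypotheses into the language of centralizer indices used in the corollary's statement. The key preliminary observation is that since $G$ is a $p$-group of maximal class of order $p^n$ with $n\geq 3$, Lemma 1.2 gives $|Z(G)| = p$ and therefore $|G:Z(G)| = p^{n-1}$; in particular $|G:Z(G)| = p^2$ exactly when $n=3$, and $|G:Z(G)|\neq p^2$ exactly when $n\geq 4$. Similarly, for a maximal subgroup $M$ of $G$ that happens to be non-abelian, $|M:Z(M)|$ is a power of $p$ at least $p^2$, so $|M:Z(M)| > p^2$ is equivalent to $|M:Z(M)| \neq p^2$.

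With these translations in place, I would simply run down the five mutually exclusive scenarios covered by Theorem 4.1 and record which ones produce a chain of length $0$ (i.e.\ a singleton $\CD(G)$). In scenario (1), $n=3$, so $\CD(G)$ has $p+3\geq 4$ elements and is \emph{not} a chain of length $0$; note this is exactly the excluded case $|G:Z(G)| = p^2$ in hypothesis (a). In scenario (2), where $G$ has an abelian subgroup $A$ of index $p$ and $n>3$, Theorem 4.1 gives $\CD(G) = \{A\}$, yielding a chain of length $0$; this matches hypothesis (a) combined with $|G:Z(G)|\neq p^2$. Scenarios (3a) and (3b) both produce $|\CD(G)| \geq p+3 \geq 4$, so these never give a chain of length $0$. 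Scenario (3c) is the only remaining case giving $\CD(G) = \{G'\}$, which corresponds to: no abelian subgroup of index $p$, $|G|>p^5$, and no maximal subgroup $M$ with $|M:Z(M)| = p^2$, i.e.\ hypothesis (b).

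For the forward direction, if $\CD(G)$ is a chain of length $0$, then $G$ falls into scenario (2) or (3c) above, and unpacking the hypotheses of those two scenarios produces exactly condition (a) or condition (b) of the corollary (using the preliminary observation to rephrase "$n>3$" as "$|G:Z(G)|\neq p^2$" and "no maximal $M$ has $|M:Z(M)| = p^2$" as "$|M:Z(M)| > p^2$ for every maximal $M$", since in case (b) no maximal subgroup is abelian). Conversely, if (a) holds, then $n\geq 4$ and we are in scenario (2), forcing $\CD(G) = \{A\}$; if (b) holds, then $n\geq 6$ and we are in scenario (3c), forcing $\CD(G) = \{G'\}$.

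There is no real obstacle here beyond the careful bookkeeping: once one checks that the three "small" subcases (scenario (1), (3a), (3b)) all give lattices strictly larger than a singleton, and that the two remaining scenarios (2) and (3c) match exactly hypotheses (a) and (b) after the two elementary translations, the equivalence drops out. The only subtle point worth stating explicitly in the write-up is why $|M:Z(M)| \neq p^2$ can be strengthened to $|M:Z(M)| > p^2$ in case (b), which is simply because no maximal subgroup of $G$ is abelian under that hypothesis and therefore $|M:Z(M)|$ is a power of $p$ at least $p^2$.
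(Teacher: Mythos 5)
Your proposal is correct and matches the paper's intent: the corollary is stated as a direct consequence of Theorem 4.1, and your argument is exactly that case-by-case read-off, with the two translation steps (using $|Z(G)|=p$ from Lemma 1.2 to rephrase $n>3$ as $|G:Z(G)|\neq p^2$, and using non-abelianness of maximal subgroups to upgrade $|M:Z(M)|\neq p^2$ to $|M:Z(M)|>p^2$) handled correctly.
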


We provide now a few remarks.  The $2$-groups of maximal class fall into $3$ categories: the dihedral groups, the semidihedral groups, and the generalized quaternion groups (see e.g. Theorem 4.1 of \cite{12}, II), and each of these possess a cyclic subgroup of index $2$. Thus, the CD lattices of these groups fall under the umbrella of items 2 and 3 in Theorem 2.1.

Xue, Lv, and Chen provide sufficient conditions for when $p$-groups of maximal class have CD lattices that fall under the umbrella of item 3 in Theorem 2.1 (see Theorem 2.9 of \cite{17}):

\begin{proposition}
Let $G$ be a finite $p$-group of order $p^n$ and of maximal class, where $p\geq 5$ and $n>2p$. If $C_G(H)=Z(H)$ for every non-abelian subgroup
$H$ of $G$, then $G$ possesses an abelian subgroup of index $p$.
\end{proposition}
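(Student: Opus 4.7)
My plan is to argue by contradiction: suppose $G$ possesses no abelian subgroup of index $p$, so that every maximal subgroup of $G$ is non-abelian, and then combine the centralizer hypothesis with Blackburn's classical structure theory of $p$-groups of maximal class to produce an absurdity when $p \geq 5$ and $n > 2p$. The whole strategy is to translate the qualitative assumption $C_G(H) = Z(H)$ into quantitative bounds on the two-step commutator structure of $G$, and then exhibit that these bounds are inconsistent with what Blackburn's theory forces when $n$ is large relative to $p$.

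First, I would set up the standard Blackburn framework (cf.\ \cite{12}, II). For $n \geq 4$, let $M = C_G(G_2/G_4)$; this is a canonical maximal subgroup of $G$ of index $p$, normal in $G$, and containing $G_2$. Choose $s \in G \setminus M$ and $s_1 \in M \setminus G_2$, and inductively set $s_i = [s_{i-1},s]$, so that $s_i \in G_i \setminus G_{i+1}$ for $2 \leq i \leq n-1$ by Lemma 1.2, and $G = \langle s, s_1 \rangle$. Let $\ell = \ell(G)$ denote the degree of commutativity, the largest integer satisfying $[G_i, G_j] \leq G_{i+j+\ell}$ for all $i, j \geq 1$. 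Blackburn shows that $\ell \geq 1$ as soon as $n$ is sufficiently large compared to $p$; in our regime $n > 2p$ one gets even stronger positivity, which is the numerical engine of the argument.

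Second, I would apply the hypothesis. Since $M$ is non-abelian, $C_G(M) = Z(M) \lneq M$, and since $M$ is generated modulo $G_2$ by $s_1$, a direct computation gives $[M,M] = [s_1, G_2]^{M} \leq G_{3+\ell}$, hence $G_{n-1-\ell} \leq Z(M)$. Now I would run the hypothesis along the chain of subgroups $H_k := \langle s_1 \rangle G_k \leq M$. Since $[s_1, G_k] \leq G_{k+1+\ell}$, the subgroup $H_k$ is abelian precisely when $k \geq n - \ell - 1$. Let $k_0$ be the least such $k$; then $H_{k_0 - 1}$ is non-abelian, so $C_G(H_{k_0 - 1}) = Z(H_{k_0 - 1})$, and comparing orders on both sides produces a lower bound on $\ell$ in terms of $n$ and $k_0$. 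A parallel computation using the conjugation action of $s$ on $G_j$ (via the Hall--Petresco identity, exploiting $p \geq 5$ so that binomial coefficients are well behaved) yields an upper bound on $\ell$ from above.

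Third, I would combine the two bounds. The threshold $n > 2p$ is calibrated precisely so that the resulting two-sided estimate on $\ell$ becomes impossible to satisfy, forcing a contradiction and therefore the abelianness of $M$, which gives the desired abelian subgroup of index $p$. The main obstacle, and the truly technical core of the proof, is the commutator bookkeeping needed to pin down $\ell$ sharply: the Blackburn inequalities relating $\ell$, $n$, and $p$ are delicate, and the hypothesis $C_G(H) = Z(H)$ must be invoked on a carefully chosen family of non-abelian subgroups of $M$ (not just $M$ itself) in order to squeeze $\ell$ from both sides. The restrictions $p \geq 5$ and $n > 2p$ are what make the two-sided squeeze genuinely incompatible; without them the commutator estimates loosen and the argument breaks down.
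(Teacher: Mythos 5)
There is a genuine gap here. First, for context: the paper does not prove this statement at all --- it is quoted verbatim from Xue, Lv and Chen (Theorem 2.9 of \cite{17}), so the expected ``proof'' is either a citation or a complete self-contained argument. Your proposal supplies neither: it is a strategy outline in which every quantitatively decisive step is named but not carried out. The entire argument is supposed to rest on a two-sided squeeze of the degree of commutativity $\ell$, yet neither bound is actually derived. The lower bound is left as ``comparing orders on both sides produces a lower bound on $\ell$ in terms of $n$ and $k_0$'' (which orders? what bound?), the upper bound is left as ``a parallel computation using the Hall--Petresco identity yields an upper bound'' (no computation is given, and it is not explained what role $p \geq 5$ plays beyond a vague remark about binomial coefficients), and the contradiction is left as ``the threshold $n>2p$ is calibrated precisely so that the estimate becomes impossible,'' with no inequality ever exhibited. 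You yourself identify the ``commutator bookkeeping'' as the technical core of the proof --- but that core is exactly what is missing, so what remains is a plausible research plan rather than a proof.

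Moreover, at least one intermediate claim as stated does not follow from your setup. You assert that $H_k=\langle s_1\rangle G_k$ is abelian \emph{precisely} when $k \geq n-\ell-1$. The degree of commutativity only provides the one-sided containments $[G_i,G_j]\leq G_{i+j+\ell}$; these give a \emph{sufficient} condition for $[s_1,G_k]=1$ (namely $k+1+\ell\geq n$), but nothing prevents $[s_1,G_k]$ from vanishing earlier, so the ``only if'' direction (hence the non-abelianness of $H_{k_0-1}$, which is where the hypothesis $C_G(H)=Z(H)$ is supposed to be applied) is unjustified; abelianness of $H_k$ also requires $[G_k,G_k]=1$, which you do not address. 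Similarly, the inference ``$[M,M]\leq G_{3+\ell}$, hence $G_{n-1-\ell}\leq Z(M)$'' is a non sequitur as written (the latter follows from $[G_1,G_j]\leq G_{1+j+\ell}$, not from the bound on $[M,M]$), and the claim that $n>2p$ yields ``even stronger positivity'' of $\ell$ is exactly the kind of Blackburn-type estimate that would need a precise statement and reference. Without these the argument cannot be checked, let alone accepted; to repair it you would either have to carry out the estimates in full (essentially reproducing the work of \cite{17}) or simply cite that paper, as the authors do.
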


Note that the finite non-abelian groups satisfying the condition in Proposition 4.3 are called \textit{CGZ-groups}.  Xue, Lv, and Chen show that if $G$ is a $p$-group of maximal class and $G$ is a CGZ-group, then $G$ is metabelian (see Proposition 2.6 of \cite{17}).
\bigskip

We end with examples of metabelian $p$-groups of maximal class for each of the three cases in item 3 of Theorem 4.1.  For the case a) such an example is presented in \cite{17}:
\begin{equation}
G{=}\langle a,b,c,d \mid a^{p^2}{=}b^p{=}c^p{=}d^p{=}1, [b,a]{=}c, [c,a]{=}d, [c,b]{=}[d,b]{=}a^p, [d,a]{=}1\rangle,\nonumber
\end{equation}where $p\geq 5$.
\bigskip

For case b) we present $SmallGroup(3^6,99)$ from GAP:

\begin{equation}
G{=}\langle a,b,c,d,e,f \mid a^3 {=}e^3{=}f^3{=}1, b^3 {=} c^2d, c^3 {=} e^2f, d^3 {=} f^2, [b,a]{=}c, [c,a]{=}d, [d,a]{=}e,
\nonumber
\end{equation}
\begin{equation}
[e,a]{=}[c,b]{=}f, [f,a]{=}[d,b]{=} [e,b]{=} [f,b]{=} [d,c]{=} [e,c]{=}[f,c]{=} [e,d]{=} [f,d]{=} [f,e]{=}1 \rangle.\nonumber
\end{equation}
\bigskip

For case c) we present $SmallGroup(5^6,651)$ from GAP:

\begin{equation}
G{=}\langle a,b,c,d,e,f \mid a^5 {=}c^5 {=}d^5 {=}e^5{=}f^5{=}1, b^5 {=} f^3, [b,a]{=}c,[c,b]{=}d, [c,a]{=}[d,b]{=}e,
\nonumber
\end{equation}
\begin{equation}
[d,a]{=}[e,b]{=}f, [e,a]{=} [f,a]{=} [f,b]{=} [d,c]{=} [e,c]{=}[f,c]{=} [e,d]{=} [f,d]{=} [f,e]{=}1 \rangle.\nonumber
\end{equation}
\bigskip

\noindent{\bf Acknowledgments.} The authors are grateful to the reviewer for its remarks which improve the previous version of the paper.

\vspace*{3ex}
\small

\begin{minipage}[t]{7cm}
Ryan McCulloch\\
Assistant Professor of Mathematics\\
University of Bridgeport\\
Bridgeport, CT 06604\\
e-mail: {\tt rmccullo@bridgeport.edu}
\end{minipage}
\hfill
\begin{minipage}[t]{7cm}
Marius T\u arn\u auceanu \\
Faculty of  Mathematics \\
``Al.I. Cuza'' University \\
Ia\c si, Romania \\
e-mail: {\tt tarnauc@uaic.ro}
\end{minipage}

\end{document}